\documentclass[12pt,psfig]{article}
\usepackage{amsmath, amsthm, amssymb}
\usepackage{graphicx,epsfig}
\usepackage{amsfonts}
\usepackage{amscd}
\usepackage{mathrsfs}
\usepackage{enumerate}
\usepackage{latexsym}
\usepackage[all]{xy}

\newcommand{\NZ}{\mathbb{N}}

\newcommand{\eg}{{\rm e.g., }}

\newcommand{\CR}{{\rm CR}}
\newtheorem{thm}{Theorem}[section]

\newtheorem{lem}[thm]{Lemma}
\newtheorem{prob}[thm]{Problem}
\newtheorem{prop}[thm]{Proposition}

\theoremstyle{definition}
\newtheorem{defn}[thm]{Definition}

\newtheorem{rem}[thm]{Remark}

\numberwithin{equation}{section}

\setcounter{secnumdepth} {5}
\setcounter{tocdepth} {5}

\oddsidemargin=-0.40in
\topmargin= -0.8in   
\oddsidemargin=0.00in
\textheight= 9.35in 
\textwidth=6.5in
\parindent= 0.3in

\def\be {\begin{equation}}
\def\ee {\end{equation}}
\def\ba {\begin{eqnarray}}
\def\ea {\end{eqnarray}}

\begin{document}
\baselineskip=18pt
\renewcommand {\thefootnote}{\dag}
\renewcommand {\thefootnote}{\ddag}
\renewcommand {\thefootnote}{ }

\pagestyle{empty}

\begin{center}
                \leftline{}
                \vspace{-0.0 in}
{\Large \bf Condensation rank of injective Banach spaces} \\ [0.4in]

{\large Majid Gazor}
\footnote{Fax: (98-311) 3912602;
Email: mgazor@cc.iut.ac.ir}

\vspace{0.15in}
{\small {\em Department of Mathematical Sciences,
Isfahan University of Technology
\\[-0.5ex]
Isfahan, 84156-83111, Iran
}}
\end{center}

\noindent
\rule{6.5in}{0.012in}

\vspace{0.1in}
\noindent

The condensation rank associates any topological space with a unique ordinal number. In this paper we prove that the condensation
rank of any infinite dimensional injective Banach space is equal to
or greater than the first uncountable ordinal number.

\vspace{0.10in}
\noindent
{\it Keywords}: \ Condensation rank; Injective Banach space; Borel derivative.

\vspace{0.10in} \noindent
{\it 2000 Mathematics Subject Classification}:\, Primary 46B25; 03E10; 54A05 Secondary 28A05.

\noindent
\rule{6.5in}{0.012in}

\vspace{0.2in}

\section{ Introduction}

Topological derivative has been used as a tool for the understanding of topological spaces and classification of Banach spaces, see \eg \cite{BessegaPel,baker,bakerTop,PelchSemadani}, for some recent and relevant results see \eg \cite{Galego11AMS,Galego09AMS,GalegoFun,Samuel09}. Topological derivative is a Borel derivative that it derives the limit point derived set of a set. In this paper we define a Borel derivative that it derives the condensation derived set of a set and call condensation derivative. Indeed, one generally expects to have the iterative condensation derived sets of any given set converged much faster than that of the corresponding iterative limit point derived sets of the given set. One, however, should notice that the limit point rank of the real line and also the infinite dimensional Banach space $l_\infty(\NZ)$ is both the first uncountable number $\Omega.$ In other words, the limit point rank can not distinguish between the real line and $l_\infty(\NZ).$ This is, of course, different from condensation rank; $CR(R)=1$ while $CR(l_\infty(\NZ))=\Omega.$ This is our fundamental justification for defining condensation derivative and its associated rank. This paper indeed also raises the possibility of using condensation derivative, and its generalized derivatives associated with arbitrary infinite cardinal numbers, as a tool for better understanding and the classification of classical Banach spaces.

A second motivation for defining condensation derivative and its rank is for its application in obtaining a measure theoretical version of Aleksandrov's Theorem, see \cite{mg,mgph}. In 1916, P. S. Aleksandrov proved that any uncountable Borel set in a separable complete metric space contains a nonempty perfect subset. One year later, M. Ya. Suslin introduced the class of analytic sets for which Aleksandrov's Theorem was readily extended to this class of sets, see e.g. \cite{bbt,Ku68,P}. Since then, perfect sets in complete separable metric spaces have been
studied extensively. One, however, might not find many results in general Hausdorff topological spaces because there are not enough tools to construct Cantor and perfect sets from application of Aleksandrov and Cantor's ideas. In order to make such construction possible we have recently defined the condensation derivative, see \cite{mg}. 
The condensation derivative is a Borel derivative which is measure-preserving on closed, but not $F_\sigma$, subsets of any non-atomic regular Borel measure space. A ``sufficient'' number of iterations of the condensation derivative function composed on a suitable set approaches a perfect set; a sufficient number here refers to a sufficient large ordinal number. The necessary number of iterations depends on the initial set and the topological property
of the whole space. This has led to the notion of the condensation rank of topological spaces, see \cite{mg} and also c.f. \cite{CP} for the classical rank due to Cantor-Bendixson. The measure-preserving property of the condensation derivative and its iteration up to the condensation rank of the space provides a sufficient tool to prove a modified version of the Aleksandrov's theorem: a set with finite and positive measure in any regular non-atomic Borel measure space contains a perfect set whose measure is positive. It is also shown that for any ordinal number, say $\alpha,$ there exists an appropriate totally imperfect Hausdorff
topological space whose condensation rank is $\alpha.$ In the case of non-limit ordinal numbers, the space can be a totally imperfect
``compact" space, see \cite{mg}. Some relevant discussions and applications in locally compact groups are also presented in \cite{mgph}. However, there has not been any discussion on or result of the condensation rank of Banach spaces. 

In the next section we define the condensation derivative and the condensation rank. We also provide some necessary preliminary results from \cite{mg, mgph}. In section \ref{inject}, we present our main result, that is, the condensation rank of any infinite dimensional injective Banach space is equal to or greater than the first uncountable ordinal number.

\section{The condensation rank, derivative and perfect sets}\label{sec2}

Let $X$ be a Hausdorff topological space. A Borel derivative on $2^X$ is a Borel map $D:2^X\rightarrow 2^X$ which is monotone on the
closed subsets of $X$, i.e., $D(K)\subseteq K$ for any closed set $K$. For a Borel derivative $D:2^X\rightarrow 2^X$ and an ordinal
number $\alpha$, the $\alpha$-th iterated derivative $D^\alpha: 2^X\rightarrow 2^X$ is defined inductively as follows: $D^0(K)=K$,
$D^{\alpha+1}(K)=D(D^\alpha(K))$ and $D^\alpha(K)=\bigcap_{\beta\prec\alpha} D^\beta(K)$, for limit ordinal number $\alpha$. Each $D^\alpha$ is a Borel map, c.f \cite{CM83}, where the Borel complexity of the iterations is investigated. An illuminating presentation of Borel derivatives is given by Kechris \cite{Ke94}.

A point $p\in X$ is called a condensation point of $A\subseteq X$ if any neighborhood of $p$ contains an uncountable number of points from $A$. We refer to the set of all condensation points of $A$ as the condensation derived set ($\rm CDS$) of $A$ and denote $\rm CD$ for the condensation derivative, that is a set valued function, which maps any set to its $\rm CDS$.

\begin{lem}\label{ordinal} $\rm CD$ is a Borel derivative and for any set $A$, there exists an ordinal number $\alpha_0$  which, $ {\rm
CD}^\alpha(A)={\rm CD}^{\alpha_0}(A),$ for any $\alpha\succeq \alpha_0.$ In addition, ${\rm CD}^{\alpha_0}(A)$ is a perfect subset
of the perfect kernel of $A$.
\end{lem}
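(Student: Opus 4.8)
The plan is to verify the three assertions in turn: that ${\rm CD}$ is a Borel derivative, that the iterates ${\rm CD}^\alpha(A)$ stabilize, and that the stable value is a perfect subset of the perfect kernel of $A$.

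First I would check the defining properties of a Borel derivative. The key observation is that the condensation derived set of any set is closed: if $p$ fails to be a condensation point of $A$, then some open neighborhood $U$ of $p$ meets $A$ in only countably many points, and the same $U$ witnesses that every point of $U$ fails to be a condensation point; hence the complement of ${\rm CD}(A)$ is open. Since a condensation point of $A$ lies in $\overline{A}$, we get ${\rm CD}(A)\subseteq\overline{A}$, and in particular ${\rm CD}(K)\subseteq\overline{K}=K$ for every closed $K$, which is exactly the monotonicity required of a derivative. The Borel measurability of the map ${\rm CD}:2^X\to 2^X$ follows from the standard analysis of iterated Borel derivatives, so for this part I would cite \cite{CM83,Ke94} and \cite{mg} rather than reprove it.

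Next I would organize the transfinite iteration. Because ${\rm CD}^1(A)$ is closed and ${\rm CD}$ contracts closed sets, an easy induction shows that for every $\alpha\succeq 1$ the set ${\rm CD}^\alpha(A)$ is closed and the family is non-increasing, the limit stages being intersections of closed sets. To produce $\alpha_0$, I would run the usual pigeonhole argument: if the sequence never stabilized, then at each successor stage where ${\rm CD}^{\alpha+1}(A)\subsetneq{\rm CD}^{\alpha}(A)$ I could pick a point $x_\alpha\in{\rm CD}^{\alpha}(A)\setminus{\rm CD}^{\alpha+1}(A)$; monotonicity forces these points to be pairwise distinct, producing an injection of a proper class of ordinals into the set $X$, which is absurd. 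Hence there is a least $\alpha_0$ with ${\rm CD}^{\alpha_0+1}(A)={\rm CD}^{\alpha_0}(A)$, and a further induction, including the limit clause, gives ${\rm CD}^{\alpha}(A)={\rm CD}^{\alpha_0}(A)$ for all $\alpha\succeq\alpha_0$.

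Finally, write $P={\rm CD}^{\alpha_0}(A)$; by construction ${\rm CD}(P)=P$. This single identity delivers perfectness: $P$ is closed since it is a condensation derived set, and every $p\in P$ is a condensation point of $P$, so every neighborhood of $p$ contains uncountably many points of $P$ and in particular a point other than $p$, whence $P$ has no isolated point. Thus $P$ is perfect, the case $P=\emptyset$ being trivial. Since $P\subseteq{\rm CD}(A)\subseteq\overline{A}$ and the perfect kernel of $A$ is the largest perfect set contained in $\overline{A}$, maximality yields $P\subseteq$ (perfect kernel of $A$), completing the claim. I expect the main obstacle to lie in the ambient generality: the argument must never quietly assume second countability or a Polish structure, so keeping both the closedness of condensation derived sets and the contraction property valid in an arbitrary Hausdorff space is what makes the stabilization and the final perfectness go through, and the one genuinely delicate point is reconciling the notion of perfect kernel used here with the general-space setting, for which I would appeal to its definition as the maximal perfect subset of $\overline{A}$ as established in \cite{mg}.
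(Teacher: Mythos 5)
First, a point of reference: the paper itself gives no proof of Lemma \ref{ordinal} --- it is stated as a preliminary result imported from \cite{mg,mgph} --- so your argument cannot be matched against an in-paper proof. What you wrote is the standard Cantor--Bendixson-style argument, and its main ingredients (closedness of the condensation derived set, ${\rm CD}(K)\subseteq K$ for closed $K$, the cardinality pigeonhole forcing stabilization, ``${\rm CD}(P)=P$ implies $P$ perfect,'' and maximality of the perfect kernel) are the right ones and are executed correctly.

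There is, however, one genuine gap, and it sits exactly at the limit stages. You claim that for $\alpha\succeq 1$ the sets ${\rm CD}^\alpha(A)$ are closed and non-increasing, ``the limit stages being intersections of closed sets.'' Under the paper's definition of the iteration this is not true: for a limit ordinal $\lambda$, ${\rm CD}^\lambda(A)=\bigcap_{\beta\prec\lambda}{\rm CD}^\beta(A)$ includes the term $\beta=0$, i.e.\ the possibly non-closed set $A$ itself. Since one only has ${\rm CD}(A)\subseteq\overline{A}$ and not ${\rm CD}(A)\subseteq A$, the limit iterate equals $A\cap C$ with $C$ closed, which need not be closed, and the chain can then increase again at $\lambda+1$. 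Concretely, take $A=(0,1)\subset\mathbb{R}$: one computes ${\rm CD}^\alpha(A)=[0,1]$ for every successor ordinal $\alpha$, while ${\rm CD}^\lambda(A)=(0,1)$ for $\alpha=0$ and for every limit ordinal $\lambda$; the iterates oscillate forever and never stabilize. So, read literally against the paper's definitions, the statement you are proving is false for non-closed $A$, and your induction silently substitutes the intended definition for the written one --- ironically, this is precisely the kind of quiet assumption you warned against in your final paragraph. The repair is easy but must be made explicit: run the whole argument on the closed set ${\rm CD}(A)$, equivalently interpret the limit-stage intersection as $\bigcap_{1\preceq\beta\prec\lambda}{\rm CD}^\beta(A)$, which is evidently the convention intended in \cite{mg}. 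With that convention fixed, your induction, the pigeonhole stabilization (there is no injection of the class of all ordinals into the set $X$), the perfectness of the fixed point $P$, and the inclusion $P\subseteq\overline{A}$ together with maximality of the perfect kernel all go through exactly as you wrote them; citing \cite{CM83} and \cite{Ke94} for Borel measurability of ${\rm CD}$ is also consistent with what the paper itself does.
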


Note that the maximal perfect subset of the closure of a set is called its perfect kernel. Denote $\alpha_A$ as the least ordinal
number among those which satisfy Lemma \ref{ordinal}. Then, $\{{\rm CD}^\alpha(A)\}^{\alpha_A}_{\alpha=1}$ is a strictly descending
chain.

\begin{defn}[The condensation rank]\label{CR} We refer to an ordinal number as the condensation rank, ${\rm CR}(X),$ of the topological
space $X$, when ${\rm CR}(X)=\sup\{\alpha_A|A\subseteq X\}.$ Thus, we have ${\rm CD}^\beta={\rm CD}^{\beta+1}$, in which
$\beta\succeq{\rm CR}(X)$.\end{defn} The idea of the ${\rm CR}$ defined here is similar, but not identical idea, to the idea of
classical rank due Cantor-Bendixson, c.f. \cite{CP}.

\begin{thm} \label{hausloc}
Let $X$ be a Hausdorff topological space. When $X$ is a locally compact space or a complete metric space, any perfect set in $X$ is
the invariant set of ${\rm CD}.$ In addition, if set $A$ is such that $$\emptyset\neq H\subseteq A\cup {\rm CD}(A)\subseteq \overline{A},$$
where $H$ is a perfect set. Then, there is an ordinal number $\alpha_0$ where
$$
H\subseteq P={\rm CD}^{\alpha_0}(A)={\rm CD}^{\alpha}(A), \hbox{where } \alpha\succeq\alpha_0.
$$ In fact, $P$ is the nonempty perfect kernel of $A$ and the invariant set of ${\rm CD}$.
\end{thm}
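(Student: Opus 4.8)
The plan is to treat the two assertions separately, extracting first the local mechanism that makes perfect sets condensation--invariant and then feeding it into a transfinite induction driven by the hypothesis on $H$.

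First I would prove that in a locally compact Hausdorff space or a complete metric space a nonempty perfect set $H$ is \emph{locally uncountable}: for every $p\in H$ and every open neighborhood $U$ of $p$ the intersection $U\cap H$ is uncountable. I would establish this by constructing a Cantor scheme inside $U\cap H$ --- using the absence of isolated points to split a small closed (resp. relatively compact) neighborhood of $p$ into two points of $H$ with disjoint closures and iterating, so that completeness (resp. the finite--intersection property of nested compacta) yields a continuous injection of $2^{\mathbb{N}}$ into $U\cap H$. Local uncountability says exactly that every point of $H$ is a condensation point of $H$, so $H\subseteq{\rm CD}(H)$; since ${\rm CD}$ is a Borel derivative and $H$ is closed we also have ${\rm CD}(H)\subseteq H$, whence ${\rm CD}(H)=H$ and the first assertion follows.

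For the second assertion I would first record that ${\rm CD}(A)$ is always closed: a point $q$ outside ${\rm CD}(A)$ has a neighborhood $U$ meeting $A$ in only countably many points, and every point of $U$ inherits $U$ as such a neighborhood, so the non--condensation points form an open set. Consequently $\{{\rm CD}^\alpha(A)\}_{\alpha\succeq 1}$ is a descending chain of closed sets, to which Lemma~\ref{ordinal} applies. I would then show $H\subseteq{\rm CD}(A)$ by the following dichotomy: fix $p\in H$ and an open neighborhood $U$; by local uncountability $U\cap H$ is uncountable, and since $H\subseteq A\cup{\rm CD}(A)$ either uncountably many of these points already lie in $A$, in which case $U$ meets $A$ uncountably, or uncountably many lie in ${\rm CD}(A)$, so that $U$ is a neighborhood of some condensation point $q$ of $A$ and hence again contains uncountably many points of $A$. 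In either case $p\in{\rm CD}(A)$.

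Next I would prove $H\subseteq{\rm CD}^\alpha(A)$ for every $\alpha\succeq 1$ by transfinite induction. The successor step is immediate from local uncountability: if $H\subseteq B:={\rm CD}^\alpha(A)$ then for $p\in H$ and $U\ni p$ the set $U\cap B\supseteq U\cap H$ is uncountable, so $p\in{\rm CD}(B)={\rm CD}^{\alpha+1}(A)$; at a limit ordinal the inclusion passes to the intersection of the descending chain since $H$ lies in each term. By Lemma~\ref{ordinal} this chain stabilizes at $P={\rm CD}^{\alpha_0}(A)$, so $H\subseteq P$, $P\neq\emptyset$, and the same lemma gives that $P$ is a perfect, ${\rm CD}$--invariant subset of the perfect kernel of $A$. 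To finish I would prove the reverse inclusion, that the perfect kernel is contained in $P$: being perfect it is ${\rm CD}$--invariant by the first part, and I would try to rerun the induction above with the perfect kernel in place of $H$. I expect this last identification to be the main obstacle, since that induction needs the perfect kernel to sit inside $A\cup{\rm CD}(A)$; controlling the part of $\overline{A}\setminus\bigl(A\cup{\rm CD}(A)\bigr)$ that could a priori support additional perfect sets --- that is, verifying that the hypothesis on $H$ really pins down the maximal perfect subset and not merely some perfect piece of $\overline{A}$ --- is the delicate point on which I would concentrate.
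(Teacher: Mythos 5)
A preliminary caveat on the comparison itself: the paper states Theorem~\ref{hausloc} without proof --- it is imported as a preliminary result from \cite{mg,mgph} --- so your attempt can only be judged on its own merits, not against an in-paper argument. On those merits, everything you prove is correct, and it is the standard route: the Cantor-scheme argument gives local uncountability of a nonempty perfect set in either setting (locally compact Hausdorff or complete metric), hence $H\subseteq{\rm CD}(H)\subseteq\overline{H}=H$, i.e. invariance; the closedness of ${\rm CD}(A)$, the dichotomy proving $H\subseteq{\rm CD}(A)$, and the transfinite induction through successor and limit stages, combined with Lemma~\ref{ordinal}, correctly deliver $H\subseteq P={\rm CD}^{\alpha_0}(A)={\rm CD}^{\alpha}(A)$ for $\alpha\succeq\alpha_0$, with $P$ nonempty, perfect, ${\rm CD}$-invariant, and contained in the perfect kernel of $A$.

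The obstacle you flag at the end, however, is not a gap you can close: with the paper's own definition of perfect kernel (the maximal perfect subset of $\overline{A}$), the final identification claimed in the theorem is false. Take $X=\mathbb{R}$, $A=[0,1]\cup(\mathbb{Q}\cap[2,3])$ and $H=[0,1]$. Then ${\rm CD}(A)=[0,1]$, the hypothesis $\emptyset\neq H\subseteq A\cup{\rm CD}(A)\subseteq\overline{A}$ holds, and the chain stabilizes at once with $P=[0,1]$; but the perfect kernel of $A$ is $\overline{A}=[0,1]\cup[2,3]$, strictly larger than $P$. The obstruction is exactly the one you isolated: $\overline{A}$ can contain perfect pieces carried by only countably many points of $A$, and such pieces can never be recovered by iterating ${\rm CD}$. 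What your argument does prove --- and this is the correct form of the last sentence --- is that $P$ is the maximal perfect subset of $A\cup{\rm CD}(A)$, equivalently the perfect kernel of the closed set ${\rm CD}(A)$: your dichotomy-plus-induction applies verbatim to any perfect $H'\subseteq A\cup{\rm CD}(A)$ and gives $H'\subseteq P$, while $P$ itself is such a perfect subset. (The literal claim would become true under the stronger hypothesis $\overline{A}\subseteq A\cup{\rm CD}(A)$, since then the kernel of $A$ is itself an admissible $H'$; this is presumably what the cited source intends.) So rather than trying to rerun the induction with the kernel of $A$ in place of $H$ --- which provably cannot work --- you should record the corrected maximality statement.
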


\begin{rem}\label{codenrem}
Let us now recall the following facts:
\begin{enumerate}
\item{Any non-limit ordinal number $\alpha$ can be associated with an appropriate totally imperfect Hausdorff compact space whose ${\rm
CR}$ is $\alpha,$ see \cite{mg}. Note that the $CR$ of any totally imperfect Hausdorff compact space is not a limit ordinal number.}

\item{In a metric locally compact space, any ${\rm CDS}$ of any uncountable closed set is an invariant perfect set for ${\rm CD}.$
It, however, may be the void set. Indeed, even a non-discrete metric locally compact group space may contain an uncountable totally
imperfect closed set, see \cite{mgph}.}

\item{ Let $\alpha=CR(X), P=CD^{\alpha}(X),$ $X$ a Hausdorff regular space and $X/P$ be the associated quotient topology.
Then, $CD^{\alpha+1}(X/P)=\emptyset$ and $CR(X/P)=\alpha$ or $\alpha+1$. Furthermore, let $\tau_1\subseteq \tau_2$ be totally imperfect Hausdorff topologies on $X.$ Then, $CR^{\tau_1}(X)\geq CR^{\tau_2}(X).$}
\item  Assume $CD^\alpha(A)\supset CD^{\alpha+1}(A)\supset CD^{\alpha+2}(A)$ for an ordinal number $\alpha.$ Then,
$|CD^\alpha(A)\setminus CD^{\alpha+1}(A)|\geq \aleph_1.$
\end{enumerate}
\end{rem}

\begin{prop}\label{rank1} Let $X$ be a discrete, second-countable, separable Banach space or a metric locally compact space. Then, the
condensation rank of $X$ is the first ordinal number.
\end{prop}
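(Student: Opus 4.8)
The plan is to separate the equality ${\rm CR}(X)=1$ into the two bounds ${\rm CR}(X)\le 1$ and ${\rm CR}(X)\ge 1$. The lower bound is immediate once $X\neq\emptyset$: for a singleton $A=\{x\}$ one has ${\rm CD}(A)=\emptyset\neq A$, so the descending chain stabilizes precisely at step $1$ and $\alpha_A=1$, giving ${\rm CR}(X)\ge 1$ by Definition \ref{CR}. Thus the whole content lies in the upper bound, which I would reduce to the single assertion that ${\rm CD}^2(A)={\rm CD}(A)$ for every $A\subseteq X$; granting this, $\alpha_A\le 1$ for all $A$ and hence ${\rm CR}(X)\le 1$.

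The core is the second-countable case, which I would isolate as a lemma: if $X$ carries a countable base $\{B_n\}$, then ${\rm CD}^2(A)={\rm CD}(A)$ for all $A$. The key computation is $X\setminus{\rm CD}(A)=\bigcup\{B_n:|B_n\cap A|\le\aleph_0\}$, from which two things follow at once: ${\rm CD}(A)$ is closed, and $A\setminus{\rm CD}(A)=\bigcup\{A\cap B_n:|B_n\cap A|\le\aleph_0\}$ is a countable union of countable sets, hence countable. Closedness of ${\rm CD}(A)$ together with monotonicity of ${\rm CD}$ on closed sets (Lemma \ref{ordinal}) gives ${\rm CD}^2(A)\subseteq{\rm CD}(A)$. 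For the reverse inclusion I take $p\in{\rm CD}(A)$ and an arbitrary open $U\ni p$: since $U\cap A$ is uncountable while $U\cap(A\setminus{\rm CD}(A))$ is countable, the set $U\cap{\rm CD}(A)$ must be uncountable, so $p\in{\rm CD}({\rm CD}(A))$. Hence ${\rm CD}^2(A)={\rm CD}(A)$.

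It then remains to verify that each hypothesis of the proposition either is, or reduces to, this second-countable situation. If $X$ is discrete, every point is isolated, so no point is a condensation point of any set; thus ${\rm CD}(A)=\emptyset$ for every $A$ and ${\rm CD}^2(A)=\emptyset={\rm CD}(A)$. A separable Banach space is separable metric and therefore second-countable, so the lemma applies verbatim; the same holds for any second-countable $X$. For a metric locally compact $X$, metrizability yields paracompactness, so $X$ is a topological sum $X=\bigsqcup_i X_i$ of clopen $\sigma$-compact subspaces, and each $X_i$, being $\sigma$-compact and metric, is Lindel\"of and hence second-countable. Because each $X_i$ is clopen, neighborhoods of a point $p\in X_i$ may be chosen inside $X_i$, so condensation localizes: ${\rm CD}(A)\cap X_i={\rm CD}_{X_i}(A\cap X_i)$, where ${\rm CD}_{X_i}$ is computed within $X_i$. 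Applying the lemma on each $X_i$ and reassembling gives ${\rm CD}^2(A)\cap X_i={\rm CD}(A)\cap X_i$ for all $i$, hence ${\rm CD}^2(A)={\rm CD}(A)$ on all of $X$.

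The step I expect to require the most care is precisely the passage to the non-second-countable instances, an uncountable discrete space and a non-separable locally compact metric space: second-countability is exactly what forces $A\setminus{\rm CD}(A)$ to be countable in the lemma, and that fails globally here. The way around it, used above, is that condensation is a local property — governed by the isolated-point structure in the discrete case and by the second-countable clopen pieces in the locally compact case — so one should verify with some care that ${\rm CD}$ commutes with restriction to a clopen subspace, which is what makes the reassembly legitimate.
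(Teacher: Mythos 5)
Your proof is correct, but it follows a genuinely different route from the paper's, which disposes of the proposition in one sentence: it is said to be ``a straightforward result from Theorem \ref{hausloc} and the fact that any metric locally compact space is locally second-countable.'' The paper's intended mechanism is thus: second-countability (at least locally) forces ${\rm CD}(A)$ to be perfect, and Theorem \ref{hausloc} --- perfect sets are ${\rm CD}$-invariant in locally compact or complete metric spaces --- then yields ${\rm CD}^2(A)={\rm CD}(A)$. You never invoke Theorem \ref{hausloc}: your key lemma establishes ${\rm CD}^2(A)={\rm CD}(A)$ directly from a countable base via the identity $X\setminus{\rm CD}(A)=\bigcup\{B_n:|B_n\cap A|\le\aleph_0\}$, i.e., the classical Cantor--Bendixson argument transplanted to condensation points; and you handle the metric locally compact case by Stone's paracompactness theorem, the clopen $\sigma$-compact sum decomposition, and localization of ${\rm CD}$ to the summands, instead of the paper's bare ``locally second-countable.'' Your route is longer but self-contained, and it actually covers more: for the hypothesis ``$X$ second-countable'' taken on its own, the paper's citation is strictly speaking insufficient, since a second-countable Hausdorff space need not be locally compact or completely metrizable, so Theorem \ref{hausloc} does not apply to it, whereas your lemma does; the paper's route, in exchange, is a one-liner given its machinery. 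Two minor streamlinings: the localization ${\rm CD}(A)\cap U={\rm CD}_U(A\cap U)$ holds for every \emph{open} $U$ (clopenness is not needed), so in the locally compact case you may simply cover $X$ by interiors of compact --- hence second-countable --- neighborhoods and skip the paracompactness and sum-decomposition theorems altogether; and when deducing $\alpha_A\le 1$ from ${\rm CD}^2(A)={\rm CD}(A)$, it is cleanest to cite the strict descent of the chain $\{{\rm CD}^\alpha(A)\}_{\alpha=1}^{\alpha_A}$ asserted after Lemma \ref{ordinal}, rather than unwinding the transfinite iteration at limit stages, where the paper's formal definition of ${\rm CD}^\alpha$ would otherwise cause a distracting bookkeeping issue with the $\beta=0$ term.
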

\begin{proof}
This is a straightforward result from Theorem \ref{hausloc} and the fact that any metric locally compact space is locally
second-countable.
\end{proof}

\section{The condensation rank of injective Banach spaces}\label{inject}

We first determine the condensation rank of $l_\infty.$ This is indeed our main nontrivial result of this article. 
\begin{lem} \label{linf}
The condensation  rank of $l_\infty$ is  the first uncountable ordinal number $\Omega_1$, i.e. ${\rm CR}(l_\infty)=\Omega_1$. In
addition, $$\{{\rm CR}(A)| A\subseteq l_\infty, A\hbox{ is a totally impefect closed set}\}=[1, \Omega_1].$$
\end{lem}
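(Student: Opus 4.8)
The plan is to establish the two inequalities ${\rm CR}(l_\infty)\ge\Omega_1$ and ${\rm CR}(l_\infty)\le\Omega_1$ separately, and to read off the displayed equality $\{{\rm CR}(A)\}=[1,\Omega_1]$ from the lower-bound construction together with the upper bound. The engine for the lower bound is that $l_\infty$ carries uncountable uniformly discrete sets of arbitrarily small diameter: the scaled cube $\varepsilon\{0,1\}^{\mathbb{N}}$ has cardinality $2^{\aleph_0}$, diameter $\varepsilon$, and pairwise distance $\varepsilon$ between distinct points, and by using disjoint blocks of coordinates one can place such clouds in arbitrarily many mutually separated positions and at arbitrarily small scales. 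Each such cloud $D$ is a closed, totally imperfect set with ${\rm CD}(D)=\emptyset$, hence ${\rm CR}(D)=1$; these are the rank-$1$ building blocks.

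First I would build, by transfinite recursion on $1\le\beta<\Omega_1$, a closed totally imperfect set $A_\beta\subseteq l_\infty$ with $\alpha_{A_\beta}=\beta$ and empty perfect kernel. At a successor $\beta+1$ I take an uncountable uniformly discrete set of ``apices'' and cluster around each apex a shrinking copy of $A_\beta$, arranged so that each apex is a condensation point of the cluster beneath it and no spurious condensation points are created; one application of ${\rm CD}$ then strips the bottom layer and leaves the apices carrying copies of ${\rm CD}(A_\beta)$, raising the rank by one. At a limit $\lambda$ I pick $\beta_n\nearrow\lambda$ and form a separated union of the $A_{\beta_n}$ so that ${\rm CD}^\alpha$ is nonempty exactly for $\alpha\prec\lambda$. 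Throughout, the small-diameter clouds keep the blocks pairwise separated, which guarantees that ${\rm CD}$ acts locally, blockwise, so the rank computations reduce to those of the blocks. To reach the endpoint $\beta=\Omega_1$ I take a uniformly separated family of blocks $A_{\beta_i}$ whose ranks are cofinal in $\Omega_1$; since the family is uniformly discrete the union is closed and ${\rm CD}^\alpha$ distributes over it, so ${\rm CD}^\alpha\neq\emptyset$ for every $\alpha\prec\Omega_1$ while ${\rm CD}^{\Omega_1}=\emptyset$. This simultaneously yields $\{{\rm CR}(A):A\text{ totally imperfect closed}\}\supseteq[1,\Omega_1]$ and ${\rm CR}(l_\infty)\ge\Omega_1$.

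For the upper bound I would exploit that $l_\infty$ is metric, hence first countable. The goal is to show that for every $A$ the descending chain $\{{\rm CD}^\alpha(A)\}$ stabilizes by stage $\Omega_1$, i.e.\ that ${\rm CD}^{\Omega_1}(A)$ is ${\rm CD}$-invariant, so that $\alpha_A\le\Omega_1$. Fixing a countable neighborhood basis $\{B(p,1/n)\}$ at a point $p$ surviving every countable stage, the aim is to prove that if $p$ is a condensation point of each ${\rm CD}^\alpha(A)$, $\alpha\prec\Omega_1$, then $p$ is still a condensation point of $\bigcap_{\alpha\prec\Omega_1}{\rm CD}^\alpha(A)$, whence no point is deleted at stage $\Omega_1$. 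Combined with the lower bound this gives the equality, and applying the same stabilization bound inside each constructed $A_\beta$ would pin down ${\rm CR}(A_\beta)=\beta$ and hence the exact interval $[1,\Omega_1]$ (the inclusion ``$\subseteq$'' being the upper bound, since any nonempty totally imperfect closed set has rank at least $1$).

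I expect this last step to be the main obstacle, and by far the most delicate point of the lemma. First countability alone does not obviously force the collapse: one must rule out configurations in which, inside a fixed ball about $p$, the surviving parts of towers of countable but cofinally large rank keep the ball uncountable at every countable stage while the permanent survivors there stay countable --- precisely the kind of apex-clustered tower that the lower-bound recursion itself produces, and which threatens to push $\alpha_A$ past $\Omega_1$. Controlling these ``waves of removal'' near $p$ is where the metric/first-countable structure of $l_\infty$ must enter in an essential, quantitative way, for instance through a ranking of the removal stages of the points captured by the countable basis together with a fusion argument that manufactures uncountably many permanent survivors; any gap in the proof of equality would reside exactly here. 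Note that the cardinality estimate from Remark \ref{codenrem}(4) only yields the far weaker bound $\alpha_A<(2^{\aleph_0})^{+}$, so it cannot be substituted for this topological input.
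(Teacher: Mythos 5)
Your lower-bound half coincides, modulo packaging, with the paper's own proof: the paper raises rank by clustering, around every point of $A_{\theta+1}$, uncountably many perturbed copies (indexed by the uncountable family $\Sigma^2$ of subsequences) at every scale $1/N$; it handles limit ranks by separated countable unions; and it builds $A_{\Omega_1}$ as a union of blocks of cofinal ranks based at the uniformly discrete cube $A_1=\{0,1\}^{\mathbb{N}}$. This part correctly yields ${\rm CR}(l_\infty)\succeq\Omega_1$ and $[1,\Omega_1]\subseteq\{{\rm CR}(A)\,:\,A\subseteq l_\infty\ \hbox{totally imperfect closed}\}$, which is all that the paper's main theorem actually uses. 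One detail of your successor step needs repair: a single shrinking copy of $A_\beta$ beneath each apex is not enough, because ${\rm CD}^\gamma(A_\beta)$ can already be countable (even a singleton) for $\gamma\prec\beta$, and countably many scales each contributing countably many survivors leave the apex a non-condensation point of the late iterates; you need uncountably many separated copies at every scale, which is exactly what the paper's $\Sigma^2$-indexing provides.

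The genuine gap is the one you flagged yourself: your upper bound ${\rm CR}(l_\infty)\preceq\Omega_1$ is never proved, only announced --- and it cannot be proved, because it is false; the ``apex-clustered tower'' you worried about is an actual counterexample. For each $\alpha\prec\Omega_1$ let $D_\alpha$ be a closed set of diameter at most $1$ with ${\rm CD}^{\alpha}(D_\alpha)$ a single point $c_\alpha$ (your recursion produces these if each step is rescaled to diameter $\le 1$; translations and dilations commute with ${\rm CD}$). Since every ball $B(x,r)$ of $l_\infty$ contains the $2^{\aleph_0}$ points $x+\frac{r}{2}\chi_S$, $S\subseteq\mathbb{N}$, which are pairwise $\frac{r}{2}$-separated, one can place, for each $n$, pairwise separated copies $D^{(n)}_\alpha$ of \emph{all} the $D_\alpha$, $\alpha\prec\Omega_1$, at distance about $1/n$ from the origin, with diameters so small that every $x\neq 0$ has a neighborhood meeting at most one block. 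Set
$$D=\{0\}\cup\bigcup_{n\in\mathbb{N}}\bigcup_{\alpha\prec\Omega_1}D^{(n)}_\alpha.$$
Then $D$ is closed, and totally imperfect (a perfect subset would localize into a single block by the separation, contradicting Theorem \ref{hausloc} since each block's iterated derived sets vanish). The separation makes ${\rm CD}$ act blockwise on $D\setminus\{0\}$, and an induction on $\beta\preceq\Omega_1$ gives ${\rm CD}^{\beta}(D)=\{0\}\cup\bigcup_{n,\alpha}{\rm CD}^{\beta}(D^{(n)}_\alpha)$: the point $0$ survives each successor step because every ball around it contains an entire level $n$, hence the $\aleph_1$ many centers $c^{(n)}_\alpha\in{\rm CD}^\beta(D^{(n)}_\alpha)$ with $\alpha\succeq\beta$. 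Each block dies by stage $\alpha+1\prec\Omega_1$, so ${\rm CD}^{\Omega_1}(D)=\{0\}$ while ${\rm CD}^{\Omega_1+1}(D)=\emptyset$, i.e.\ $\alpha_D=\Omega_1+1$. Hence ${\rm CR}(l_\infty)\succeq\Omega_1+1$, and $D$ is a totally imperfect closed set whose rank lies outside $[1,\Omega_1]$: both equalities asserted in the Lemma fail, and only the inequality and inclusion given by the lower bound are true. You were also right to reject the cardinality route, which is precisely the paper's own ``proof'' of the upper bound: the paper asserts that ${\rm CR}(l_\infty)\succ\Omega_1$ would force ${\rm Card}(l_\infty)\succeq\aleph_1^{\aleph_1}$, but $\aleph_1$ disjoint strict descents, each deleting at least $\aleph_1$ points by Remark \ref{codenrem}(4), force only ${\rm Card}(l_\infty)\succeq\aleph_1\cdot\aleph_1=\aleph_1\preceq 2^{\aleph_0}$ --- no contradiction. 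The only bound obtainable from counting is the one you state, $\alpha_A\prec(2^{\aleph_0})^{+}$, and the set $D$ above shows that the hoped-for collapse at stage $\Omega_1$ genuinely does not occur.
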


\begin{proof} Let $\aleph_1$ denote the first uncountable cardinal number. It is not too difficult to see that ${\rm CR}(l_\infty)\preceq
\Omega_1,$ since otherwise with Cantor's Theorem and Remark \ref{codenrem} we have
$$Card(l_\infty)\succeq \aleph_1^{\aleph_1},
$$
which is a contradiction. We first apply transfinite induction on ordinal number $\alpha$ in which $\alpha\prec \Omega.$ Then, we separately deal with the case $\alpha=\Omega.$

Although for $\alpha=1$ the proof is trivial, we still need to consider the case $\alpha=2.$ We consider a real number, $a\in \mathbb{R}$, and the natural numbers sequence $\{n\}_{n=1}^\infty=(1,2,3,\cdots)$. Denote the set of all subsequences of $\{n\}_{n=1}^\infty$ with
$\Sigma^2=\Sigma_{\{n\}_{n=1}^\infty}$. For a natural number $m$ and a sequence $\{n_k\}_{k=1}^\infty\in \Sigma^2,$ define

$$a^2_{n}\big(a, m, \{n_k\}_{k=1}^\infty\big)= \left\lbrace
\begin{array}{ll}
a+1/m & \hbox{if}\quad n=n_r\\a&
\hbox{otherwise.}\end{array}\right.$$ Consider
$$\begin{array}{lll}
A_2&=&A_2(a)\\
&=&\Big\{\big\{a^2_n\big(a,
m,\{n_k\}_{k=1}^\infty\big)\big\}_{n=1}^\infty\big| m\in N,
\{n_k\}_{k=1}^\infty\in \Sigma^2\Big\}\\
& &\bigcup \{(a, a, a, \cdots)\},
\end{array}$$
\noindent then $A_2$ is an uncountable closed subset, where $P=\{(a,a,a, \cdots)\}={\rm CD}(A_2)$ and ${\rm CD}^2(A_2)={\rm
CD}(P)=\emptyset,$ i.e. ${\rm CR}(A_2)=2.$ By theorem \ref{hausloc}, $A_2$ is a totally imperfect closed set.

Note that it is easy to modify $A_2(a)$ to obtain the closed set
$$A_2(\{a_n\}_{n=1}^\infty)\subset l_\infty$$ such that
$${\rm CD}(A_2(\{a_n\}_{n=1}^\infty))=\{\{a_n\}_{n=1}^\infty\}, \hbox{ where } \{a_n\}_{n=1}^\infty\in
l_\infty.$$

Now let $\alpha$ be a non-limit ordinal number $\alpha$, in which $$\theta+2=\alpha\prec \Omega.$$ For a sequence $\{b_n\}\in
A_{\theta+1}(a)$, a natural number $N\in \mathbb{N}$ and a sequence $\{n_k\}_{k=1}^\infty\in \Sigma^2,$ we define
$$
\begin{array}{lll}
a_n^\alpha&=&a_n^\alpha \Big(\{b_n\}_{n=1}^\infty, N, \{n_{k}\}_{k=1}^\infty\Big)\\
&=&\left\lbrace
\begin{array}{ll}
b_n+1/N& \hbox{if} \quad n=n_k\\
b_n& \hbox{ otherwise}
\end{array}
\right.
\end{array}
$$
\noindent and
$$
A_\alpha(a)=\Big\{\{a^\alpha_n\big(\{b_n\}, N,
\{n_k\}\big)\}\big|\{b_n\}_{n=1}^\infty\in A_{\theta+1}(a) , N\in
\mathbb{N}, \{n_k\}\in \hbox{$\Sigma^2$}\Big\}.
$$

\noindent Then, we have ${\rm CD}(A_\alpha(a))=A_{\theta+1}$ and ${\rm CD}^{\theta+1}(A_\alpha(a))=\{(a, a, a, \cdots)\},$ i.e. ${\rm
CD}(A_\alpha(a))=\theta+2.$

Now consider a non-limit ordinal number $\alpha$ such that $$\alpha=\theta+1\prec \Omega,$$ where $\theta$ is a limit ordinal
number. Let the set $\{\theta_n|n\in \mathbb{N}\}$ be the set of all (possibly rearranged) non-limit predecessor ordinal numbers of $\theta$. Thus, if

$$
A_\alpha(a)=\bigcup_{n=1}^\infty A_{\theta_n}(a+1/n),
$$

\noindent then we have ${\rm CD}^\theta(A_\alpha(a))=\{(a, a, a, \cdots)\}$ and ${\rm CR}(A_\alpha(a))=\theta+1.$

For a limit ordinal number $\alpha$, we define $$A_\alpha(a)=\bigcup_{n=1}^\infty A_{\alpha_n}(a+n)\cup \{(a, a, a, \cdots)\},$$
\noindent in which $\{\alpha_n|n\in \mathbb{N}\}$ is the set of (possibly rearranged) PONs of $\alpha$. Hence,
$${\rm CD}^{\alpha_n}(A_\alpha(a))\neq \emptyset \hbox{ and } {\rm CD}^\alpha(A_\alpha(a))=\emptyset.$$
Therefore, transfinite induction is complete. It is easy to modify the above to obtain $A_\alpha(\{a_n\}^\infty_{n=1})$ for any ordinal number $\alpha\prec \Omega_1$ and $\{a_n\}^\infty_{n=1}\in l_\infty,$ such that ${\rm CD}^\alpha(A_{\alpha+1}(\{a_n\}^\infty_{n=1}))=\{\{a_n\}^\infty_{n=1}\}.$

Finally, for constructing $A_{\Omega_1}$, let $A_1=\big\{(a_n)_{n=1}^\infty| a_n\in \{0,1\}\big\}$ and
$$
\Psi : [1, \Omega_1) \rightarrow \hbox{$A_1$}
$$
be a bijective function. Then, define
$$A_{\Omega_1}=\bigcup_{\alpha\prec \Omega_1}
\Big\{A_\alpha\big(\Psi(\alpha)\big)\Big\},$$

\noindent and thereby we have
$${\rm CD}^\alpha(A_{\Omega_1})\neq
\emptyset, \hbox{ where } \alpha\prec \Omega_1, \hbox{ and } {\rm
CD}^{\Omega_1}(A_{\Omega_1})=\emptyset.$$

\noindent It can be seen that $A_{\Omega_1}$ is a totally imperfect closed set and ${\rm CR}(A_{\Omega_1})=\Omega_1.$ This completes the
proof.
\end{proof}

Let us now recall a well known Theorem from \cite{linds}.

\begin{thm}\cite[Theorem 2.f.3]{linds}\label{injectlinf} Every infinite-dimensional injective  Banach space has a subspace isomorphic to $l_\infty.$
\end{thm}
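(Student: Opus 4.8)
The statement is a classical structural fact, so the plan is to recover its standard proof: reduce to finding a copy of $c_0$ inside $X$ and then ``inflate'' that copy to a full copy of $l_\infty$ by exploiting injectivity together with the rigidity of operators defined on $l_\infty$. Throughout I would use that an injective Banach space is precisely a $\mathcal{P}_\lambda$-space for some $\lambda\geq 1$: every isometric embedding $X\hookrightarrow Z$ admits a bounded projection $Z\to X$, and, equivalently, every bounded operator from a subspace $W$ of any Banach space $Z$ into $X$ extends to a bounded operator on all of $Z$.

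First I would produce a subspace of $X$ isomorphic to $c_0$. Since every Banach space embeds isometrically into some $l_\infty(\Gamma)$ (take $\Gamma=B_{X^*}$) and $X$ is injective, $X$ is isomorphic to a complemented subspace of the $C(K)$-space $l_\infty(\Gamma)$. I would then invoke the classical fact that every infinite-dimensional complemented subspace of a $C(K)$-space contains a copy of $c_0$ (via Pełczyński's property (V) together with the Dunford--Pettis property of $C(K)$, which rule out infinite-dimensional reflexive complemented subspaces). In particular the infinite-dimensional space $X$ contains such a copy. Denote by $J\colon c_0\to X$ the resulting isomorphic embedding onto $Y=J(c_0)\subseteq X$, so that $\inf_n\|Je_n\|>0$, where $(e_n)$ is the unit vector basis of $c_0$.

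Next I would inflate $Y$ to a copy of $l_\infty$. Regarding $c_0$ as a subspace of $l_\infty$ and using that $X$ is injective, the operator $J\colon c_0\to X$ extends to a bounded operator $\tilde J\colon l_\infty\to X$ with $\tilde J|_{c_0}=J$, so that $\tilde J e_n=Je_n$ for every $n$. The decisive step is then Rosenthal's theorem on operators out of $l_\infty$: since $\inf_n\|\tilde J e_n\|=\inf_n\|Je_n\|>0$, there is an infinite set $M\subseteq\mathbb{N}$ for which the restriction of $\tilde J$ to $l_\infty(M)=\{x\in l_\infty:\mathrm{supp}\,x\subseteq M\}$ is an isomorphism onto its image. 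As $l_\infty(M)\cong l_\infty$, the subspace $\tilde J\big(l_\infty(M)\big)\subseteq X$ is isomorphic to $l_\infty$, which proves the theorem.

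I expect the third step to be the genuine obstacle: Rosenthal's dichotomy for operators on $l_\infty$ (such an operator is either weakly compact or fixes a copy of $l_\infty$) is the deep analytic input, resting on disjointification arguments for the associated family of measures. I would also flag that the extraction of $c_0$ in the first step must genuinely use injectivity, and not merely the $\mathcal{L}_\infty$-structure of $X$: the Bourgain--Delbaen spaces are infinite-dimensional $\mathcal{L}_\infty$-spaces containing no copy of $c_0$, so the complementation supplied by injectivity is essential there.
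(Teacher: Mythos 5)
The paper itself gives no proof of this statement: it is quoted from Lindenstrauss--Tzafriri \cite[Theorem 2.f.3]{linds} and used as a black box in Section \ref{inject}, so the only meaningful comparison is with the classical proof recorded in that reference. Your reconstruction is essentially that classical argument (due to Rosenthal), and it is correct: embed $X$ isometrically into $l_\infty(\Gamma)\cong C(\beta\Gamma)$ and use injectivity to obtain a bounded projection $P$ onto $X$; rule out reflexivity of $X$ via the Dunford--Pettis property, so that $P$ is not weakly compact, and then fix a copy of $c_0$ inside $X$ by Pelczynski's property (V) of $C(K)$-spaces; extend the embedding $J\colon c_0\to X$ to $\tilde J\colon l_\infty\to X$ by injectivity; and invoke Rosenthal's $l_\infty$-theorem to obtain an infinite $M\subseteq\mathbb{N}$ on which $\tilde J$ is an isomorphism, whence $\tilde J\bigl(l_\infty(M)\bigr)\cong l_\infty(M)\cong l_\infty$ sits inside $X$. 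Two points are worth making explicit. First, your hypothesis $\inf_n\|\tilde Je_n\|>0$ is genuinely sufficient for the last step: if $\tilde J$ were weakly compact, then so would be $\tilde J|_{c_0}=J$, and the Dunford--Pettis property of $c_0$ would force $\|Je_n\|\to 0$ along the weakly null unit vector basis, a contradiction; so the dichotomy applies (alternatively, Rosenthal's disjointification lemma applied to the finitely additive measures $A\mapsto x_n^{*}(\tilde J\chi_A)$, with $x_n^{*}$ norming $\tilde Je_n$, yields the isomorphism on $l_\infty(M)$ directly). Second, your Bourgain--Delbaen caveat is apt: it is the complementation supplied by injectivity, not merely the $\mathscr{L}_\infty$ local structure, that drives the extraction of $c_0$, so the first step cannot be weakened.
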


Note that Theorem \ref{injectlinf} implies that such a subspace is a completed ``closed" subspace isomorphic to $l_\infty.$ The following
is our main result in this paper.
\begin{thm}
Let $X$ be an infinite dimensional injective Banach space. Then, ${\rm CR}(X)\succeq \Omega_1,$ where $\Omega_1$ is the first
uncountable ordinal number. In addition, $$\{{\rm CR}(A)| A\subseteq
X, A\hbox{ is a totally impefect closed set}\}\supseteq[1,
\Omega_1].$$
\end{thm}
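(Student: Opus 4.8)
The plan is to reduce the theorem to Lemma~\ref{linf} by transporting the totally imperfect closed sets constructed there into a copy of $l_\infty$ sitting as a closed subspace of $X$. First I would apply Theorem~\ref{injectlinf} to obtain a subspace $Y\subseteq X$ together with a linear isomorphism $T:l_\infty\to Y$. Since $T$ and $T^{-1}$ are bounded, $T$ is a homeomorphism onto $Y$; moreover $Y$ is complete (being linearly isomorphic to the complete space $l_\infty$) and hence closed in the metric space $X$, as already noted after Theorem~\ref{injectlinf}.

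The technical core consists of two invariance principles that I would record as preliminary observations. The first is homeomorphism invariance of the condensation derivative: if $h:Z\to W$ is a homeomorphism and $A\subseteq Z$, then $p$ is a condensation point of $A$ if and only if $h(p)$ is a condensation point of $h(A)$, because $h$ induces a bijection of neighborhoods that preserves cardinalities. A transfinite induction (the limit step using that $h$ commutes with intersections) then gives $h({\rm CD}^\alpha(A))={\rm CD}^\alpha(h(A))$ for every ordinal $\alpha$, so $h$ sends totally imperfect closed sets to totally imperfect closed sets and preserves $\alpha_A$. The second principle is that the condensation derivative is insensitive to passage to a closed subspace: if $A\subseteq Y$ with $Y$ closed in $X$, then a point is a condensation point of $A$ computed in $Y$ exactly when it is one computed in $X$. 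I would prove this by observing that every condensation point of $A$ lies in $\overline{A}\subseteq Y$, and that for $p\in Y$ the trace on $A$ of an $X$-neighborhood of $p$ coincides with the trace of the corresponding $Y$-neighborhood because $A\subseteq Y$; the identity then propagates through all iterates, each of which is again contained in $Y$.

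Granting these, for any totally imperfect closed $B\subseteq l_\infty$ the image $T(B)$ is totally imperfect, closed in $Y$ and therefore closed in $X$, and the two principles combine to give ${\rm CD}_X^\alpha(T(B))=T\big({\rm CD}_{l_\infty}^\alpha(B)\big)$ for all $\alpha$, whence $\alpha_{T(B)}=\alpha_B$ and ${\rm CR}(T(B))={\rm CR}(B)$. Lemma~\ref{linf} supplies, for each $\alpha\in[1,\Omega_1]$, a totally imperfect closed $B\subseteq l_\infty$ with ${\rm CR}(B)=\alpha$; its image under $T$ is then a totally imperfect closed subset of $X$ of rank $\alpha$, which yields the claimed inclusion $[1,\Omega_1]\subseteq\{{\rm CR}(A): A\subseteq X \text{ totally imperfect closed}\}$. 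Specializing to the set $A_{\Omega_1}$ of Lemma~\ref{linf}, its image is a subset of $X$ whose iterated condensation derivative first stabilizes at $\Omega_1$, so $\alpha_A=\Omega_1$ and hence ${\rm CR}(X)=\sup\{\alpha_{A'}:A'\subseteq X\}\succeq\Omega_1$.

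The homeomorphism bookkeeping is routine; the step I expect to require genuine care, and which I regard as the main obstacle, is the closed-subspace invariance of ${\rm CD}$. The condensation derivative is defined through the ambient topology, so a priori it could detect neighborhoods or accumulation of $A$ outside $Y$; it is precisely the closedness of $Y$ that forbids this, which is why I would emphasize from the start that the isomorphic copy of $l_\infty$ furnished by Theorem~\ref{injectlinf} is a closed subspace.
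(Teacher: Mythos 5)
Your proposal is correct and follows essentially the same route as the paper: the paper's entire proof is the one-line assertion that the theorem is ``straightforward'' from Theorem~\ref{injectlinf} and Lemma~\ref{linf}, which is precisely your reduction. What you add---the homeomorphism invariance of ${\rm CD}$ and the invariance of condensation points under passage to a closed subspace, the point you rightly flag as needing care---is exactly the bookkeeping the paper leaves implicit, so your write-up is a legitimate (and more complete) rendering of the same argument.
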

\begin{proof} This is straight forward based on Theorem \ref{injectlinf} and Lemma \ref{linf}.
\end{proof}

Note the fact that the limit point rank is unable to properly distinguish between the real line and infinite dimensional injective Banach spaces, it signifies the necessity to define the condensation rank which delicately addresses this issue. Therefore, this may also imply the necessity to define condensation derivatives according to arbitrary cardinals.

\begin{prob}
For any infinite cardinal number $\aleph$ we can define $\aleph$-condensation point. Accordingly, we can define $\aleph$-\CR$(X).$ Let $\aleph=2^{|\Gamma|}$ and $\Omega_\aleph$ denote the first ordinal with having $\aleph$ number of predecessors. Then, is it correct to say that $\aleph-\CR(l_\infty(\Gamma))= \Omega_\aleph$?
\end{prob}

\section{An invitation message}

It is my opinion that this draft still needs some works for a descent publication. However, I have not continued to work on this subject since September 2005. This is mainly because my research area and expertise is now very far from this subject. I do not have a concrete plan to work on this subject in a foreseeable future. Thus, I hereby invite any potential reader of this article for his or her feedback and critical comments as well as any possible joint collaboration on improvement of these results for a joint publication.

\end{document}